\newtheorem{theorem}{Theorem}[section]
\newtheorem{thm}[theorem]{Theorem}
\newtheorem{lem}[theorem]{Lemma}
\newtheorem{pro}[theorem]{Proposition}
\title[Neighborhoods in enhanced power graphs]{On neighborhoods in the enhanced power graph\\ associated with a finite group}
\author[M. L. Lewis]{Mark L. Lewis}
\address{Department of Mathematical Sciences,
Kent State University, Kent, OH  44242, U.S.A.}
\email{(Lewis) lewis@math.kent.edu}
\author[C. Monetta]{Carmine Monetta}
\address{Dipartimento di Matematica, Universit\`a di Salerno, Fisciano 84084 (SA), Italy}
\email{(Monetta) cmonetta@unisa.it}
\subjclass[2020]{20D10, 05C25, 20D60}
\keywords{Enhanced power graph; Cyclic graph; A-group}
\begin{document}

\begin{abstract}
This article investigates neighborhoods' sizes in the enhanced power graph (as known as the cyclic graph) associated with a finite group. In particular, we characterize finite $p$-groups with the smallest maximum size for neighborhoods of nontrivial element in its enhanced power graph. 


\end{abstract}

\maketitle


\section{Introduction}\label{I}

\noindent 
All groups considered in this paper are finite unless otherwise stated. 
To study the structure of a group, one can look at the invariants of some graphs whose vertices are the elements of the group and whose edges reveal some properties of the group itself. More precisely, if $G$ is a group and $\mathcal B$ is a class of groups, the $\mathcal B$-graph associated with $G$, denoted by $\Gamma_{\mathcal B}(G)$, is a simple and undirected graph whose vertices are the elements of $G$ and there is an edge between two elements $x$ and $y$ of $G$ if the subgroup generated by $x$ and $y$  is a $\mathcal B$-group.

Several features of a finite group can be detected analyzing the invariants of its $\mathcal B$-graph. We refer to \cite{Cameron} for a survey on this topic and to \cite{GLM} and \cite{GM} for related works.
Recent papers deal with the investigation of the (closed) neighborhood $\mathcal I_\mathcal B(x)$ of a vertex $x$ in $\Gamma_{\mathcal B}(G)$, that is, the set of all $y$ in $G$ such that $x$ and $y$ generate a $\mathcal B$-group. When $\mathcal B$ is the class of abelian groups, then $\mathcal I_\mathcal B(x)$ coincides with the centralizer of $x$ in $G$, thus $\mathcal I_\mathcal B(x)$ is a subgroup. However, in general this is not the case when $\mathcal B$ is distinct from the class of abelian groups. Nevertheless, even though $\mathcal I_\mathcal B(x)$ is not a subgroup of $G$ in general, it can happen that the characteristics of a single neighborhood in a $\mathcal B$-graph could affect the structure of the whole group $G$. For instance, when $\mathcal B$ coincides with the class $\mathcal S$ of soluble groups, it has been showed that the combinatorial properties, as well as, arithmetic ones of $\mathcal I_\mathcal B(x)$ may force the whole group to be abelian or nilpotent (see \cite{ALMM} and \cite{ADM} for more details).

Here we start considering the class $\mathcal C$ of all cyclic groups.  Cameron in \cite{Cameron} calls the graph $\Gamma_{\mathcal C} (G)$ the {\it enhanced power graph}.  However, this graph was first studied in \cite{Imp} under the name {\it cyclic graph}. Further investigations under this name occurred in \cite{ImLe}.  Recently, this graph has been investigated in \cite{CLSTU}, \cite{CLSTU1}, and \cite{CLSTU2}. 

Our interest for $\Gamma_{\mathcal C} (G)$ chiefly concerns the cardinality of $I_\mathcal C (x)$ discussing the possible values that can occur for $|I_\mathcal C (x)|$ when $x$ belong to a $p$-group $G$. Denote by $n_G$ the maximum of the sizes of all $I_\mathcal C(x)$ for $x \in G \setminus \{1\}$. Then clearly we have
\[
\exp(G) \leq n_G \leq |G|,
\]
where $\exp(G)$ denotes the exponent of the group $G$. Every time $G$ has a non trivial universal vertex, that is a nontrivial element adjacent to any element of $G$, $n_G=|G|$. These groups have been characterized in the soluble case in \cite{CLSTU2}. Our first goal is to characterize $p$-groups $G$ with $n_G = \exp(G)$. Indeed we prove the following:

\begin{thm}\label{thm:main.cyc}
Let $G$ be a finite $p$-group. Then $n_G=\exp(G)$ if and only if $G$ is either cyclic, or $\exp(G)=p$ or $G$ is a dihedral $2$-group.
\end{thm}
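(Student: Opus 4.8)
The plan is as follows.

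\textbf{Reduction.} First I reduce to a statement about the sizes of \emph{all} nontrivial neighborhoods. Since the subgroups of a cyclic $p$-group are linearly ordered, $\langle x,y\rangle$ being cyclic forces $\langle y\rangle\subseteq\langle x\rangle$ or $\langle x\rangle\subseteq\langle y\rangle$; hence if $x$ has order $\exp(G)=p^e$, maximality of $\langle x\rangle$ kills the second possibility and gives $I_{\mathcal C}(x)=\langle x\rangle$, i.e. $|I_{\mathcal C}(x)|=\exp(G)$. Therefore $n_G=\exp(G)$ if and only if $|I_{\mathcal C}(y)|\le\exp(G)$ for every $1\ne y\in G$. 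Writing $I_{\mathcal C}(y)=\bigcup\{C:\ C\text{ maximal cyclic},\ y\in C\}$, sufficiency is then routine: if $G$ is cyclic a generator is a universal vertex; if $\exp(G)=p$ then $I_{\mathcal C}(y)=\langle y\rangle$; and if $G=D_{2^n}$ then every cyclic subgroup of order $\ge 4$ lies in the rotation subgroup $R$, so $I_{\mathcal C}(y)=R$ for $1\ne y\in R$ and $I_{\mathcal C}(y)=\{1,y\}$ for a reflection $y$, giving $n_G=2^{n-1}=\exp(G)$.

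\textbf{Two ingredients for necessity.} Assume now $G$ is non-cyclic with $\exp(G)=p^e$, $e\ge 2$. The first ingredient is a \textbf{separation property}: if $C_1\ne C_2$ are maximal cyclic subgroups with $C_1\cap C_2\ne 1$ and $|C_1|=p^e$, then taking $y$ to generate $C_1\cap C_2$ yields $|I_{\mathcal C}(y)|\ge|C_1\cup C_2|=p^e+|C_2|-|C_1\cap C_2|>p^e$ (as $C_2\not\subseteq C_1$ forces $|C_1\cap C_2|<|C_2|$), contradicting the reduced hypothesis; consequently every nontrivial element of a maximal cyclic subgroup of order $p^e$ lies in no other maximal cyclic subgroup. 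The second ingredient is a \textbf{local lemma}: for each group $K$ of order $p^{e+1}$ having a cyclic subgroup of index $p$ and not isomorphic to a cyclic group or to $D_{2^{e+1}}$ — that is, $K\in\{C_{p^e}\times C_p,\ M_{p^{e+1}},\ SD_{2^{e+1}},\ Q_{2^{e+1}}\}$ (with degeneracies $SD_8\cong M_8\cong D_8$ when $e=2$) — one has $n_K>\exp(K)=p^e$, exhibited by a central element $z$ of order $p$ lying in two distinct cyclic subgroups of order $p^e$ of $K$ (respectively, noting the central involution of $Q_{2^{e+1}}$ is a universal vertex). Since $I_{\mathcal C}(z)$ computed in any subgroup is contained in the one computed in $G$, no such $K$ can embed in $G$.

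\textbf{Pinning down a maximal cyclic.} Fix $x$ of order $p^e$ and put $C=\langle x\rangle$. As $G$ is non-cyclic, $C$ is proper, so $N_G(C)>C$; choose $g\in N_G(C)\setminus C$ with $g^p\in C$. If $g^p\ne1$, a maximal cyclic $D\supseteq\langle g\rangle$ satisfies $D\ne C$ and $D\cap C\ni g^p\ne1$, contradicting separation; hence $g^p=1$ and $H:=\langle x,g\rangle$ has order $p^{e+1}$ with cyclic subgroup $C$ of index $p$ containing an element of order $p$ outside $C$. By the classification of $p$-groups with a cyclic maximal subgroup together with the local lemma, $H\cong D_{2^{e+1}}$; in particular $p=2$. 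Moreover $C_G(x)=C$: otherwise $\langle x\rangle\le\langle x,w\rangle\cong C_{2^e}\times A$ for some nontrivial abelian $2$-group $A$, and taking an involution $v\in A$ one finds a cyclic subgroup $\langle xv\rangle\ne\langle x\rangle$ of order $2^e$ with $(xv)^{2^{e-1}}=x^{2^{e-1}}\ne1$, once more violating separation.

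\textbf{Conclusion.} Any $h\in N_G(C)$ acts on $C\cong C_{2^e}$ with $h^2\in C_G(C)=C$, so $\langle x,h\rangle$ has a cyclic subgroup of index at most $2$; by the classification and the local lemma, either $h\in C$ or $\langle x,h\rangle\cong D_{2^{e+1}}$, the latter forcing $h$ to invert $x$ and $h^2=1$, so $h\in gC$. Thus $N_G(C)=C\cup gC=H$. Since $C$ is the unique cyclic subgroup of order $2^e$ in $H\cong D_{2^{e+1}}$, it is characteristic in $H$, whence $N_G(H)\le N_G(C)=H$; as a proper subgroup of a $2$-group has strictly larger normalizer, $G=H\cong D_{2^{e+1}}$, a dihedral $2$-group, as required. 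The main obstacle is the local lemma and its repeated invocation: one must enumerate correctly the $p$-groups with a cyclic maximal subgroup and, for each exceptional type, locate the low-order element whose neighborhood overflows $\exp(G)$, the estimates being most delicate in the base case $e=2$.
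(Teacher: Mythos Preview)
Your route differs substantially from the paper's. The paper argues by minimal counterexample: it locates a subgroup $H$ of order $p\cdot\exp(G)$ containing an element $x$ of maximal order, applies Proposition~\ref{pro:maxcyc} to force $H\cong D_{2^{e+1}}$ and $p=2$, uses minimality to obtain $|G:H|=2$, and then performs a case analysis on the orders of elements of $G\setminus H$, invoking a GAP check for $|G|=16$ and a cited result on automorphisms of dihedral $2$-groups for the endgame. Your argument is direct: the separation property plus the local lemma produce $H\cong D_{2^{e+1}}$ and $p=2$; you then establish $C_G(x)=C$, pin down $N_G(C)=H$, and conclude via the $p$-group normalizer condition with $C$ characteristic in $H$. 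This avoids induction, the computer check, and the dihedral-automorphism citation.

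Two points need correction. First, a minor one: $SD_{2^{e+1}}$ has only \emph{one} cyclic subgroup of order $2^e$, so your stated witness for the local lemma there is wrong; but $z=x^{2^{e-1}}=(xy)^2$ also lies in the order-$4$ subgroup $\langle xy\rangle$, and $|\langle x\rangle\cup\langle xy\rangle|=2^e+2>2^e$ suffices. Second, and more substantive, the assertion ``any $h\in N_G(C)$ satisfies $h^2\in C_G(C)=C$'' is unjustified: for $e\ge4$ the group $\mathrm{Aut}(C_{2^e})$ contains elements of order exceeding $2$, so this does not follow from $C_G(C)=C$ alone. The repair is short. If $hC$ has order $2^k$ in $N_G(C)/C$, separation forces $h^{2^k}=1$; if $k\ge2$ then $h^{2^{k-1}}$ is an involution outside $C$, and your own argument shows it must invert $x$, so the automorphism of $C$ induced by $h$ has its $2^{k-1}$-th power equal to inversion. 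But in $\mathrm{Aut}(C_{2^e})\cong\langle-1\rangle\times\langle 5\rangle$ (for $e\ge3$; and $\mathrm{Aut}(C_4)\cong C_2$) inversion is not a proper $2$-power, a contradiction. Hence $k=1$, and the remainder of your conclusion is valid.
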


Going further one may ask which is the second value that can occur for $n_G$, and the answer is given by the following.

\begin{pro}\label{pro:nG}
Let $G$ be a $p$-group and assume $n_G > \exp(G)$. Then $n_G \geq p^{\alpha + 1} - p^{\alpha} + p^{\alpha-1}$.
\end{pro}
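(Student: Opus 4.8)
Throughout write $\exp(G)=p^{\alpha}$. Since $n_G>\exp(G)$ forces $G$ to be non‑cyclic and of exponent strictly bigger than $p$ by \Cref{thm:main.cyc}, we may assume $\alpha\ge 2$. The plan begins with a reduction to elements of order $p$: for $y\neq 1$ in a finite $p$‑group one has that $\langle x,y\rangle$ is cyclic if and only if $x\in\langle y\rangle$ or $y\in\langle x\rangle$; hence if $|x|=p^{\beta}$ then $x\in\langle y\rangle$ implies $x^{p^{\beta-1}}\in\langle y\rangle$, while every nontrivial power of $x$ has $x^{p^{\beta-1}}$ in its cyclic span, so $I_{\mathcal C}(x)\subseteq I_{\mathcal C}(x^{p^{\beta-1}})$. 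Consequently $n_G$ is already attained on some $u$ with $|u|=p$, and it suffices to prove $|I_{\mathcal C}(u)|\ge p^{\alpha+1}-p^{\alpha}+p^{\alpha-1}$.

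Next I would encode $|I_{\mathcal C}(u)|$ combinatorially. Partitioning $I_{\mathcal C}(u)\setminus\{1\}$ according to the cyclic subgroup generated by each element gives
\[
|I_{\mathcal C}(u)|=1+\sum_{k\ge 1}n_{k}\bigl(p^{k}-p^{k-1}\bigr),
\qquad
n_{1}=1,\quad n_{k}=0\ \text{for}\ k>\alpha,
\]
where $n_{k}$ is the number of cyclic subgroups of order $p^{k}$ containing $u$. The point of the formula is that if $n_{\alpha}\ge p$, then, using that $n_{k}\ge 1$ for $1\le k\le\alpha$ (a cyclic group of order $p^{k+1}$ through $u$ contains one of order $p^{k}$ through $u$), one gets $|I_{\mathcal C}(u)|\ge 1+\sum_{k=1}^{\alpha-1}(p^{k}-p^{k-1})+p\,(p^{\alpha}-p^{\alpha-1})=p^{\alpha+1}-p^{\alpha}+p^{\alpha-1}$, which is exactly the claimed bound. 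So the target becomes: under the hypothesis, $n_{\alpha}\ge p$ (or, in the cases where that is false, the sum above is forced above the threshold for another reason).

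The key tool I would isolate is a counting lemma: in a finite $p$‑group a cyclic subgroup $C$ of order $p^{k}$ is contained in $0$, $1$, or at least $p$ cyclic subgroups of order $p^{k+1}$. To prove it, note that every such overgroup, being abelian and containing $C$, lies in $Z:=C_{G}(C)$, in which $C$ is central; let $Z$ act by conjugation on the set $\Sigma$ of these overgroups. Since $\#\{\text{fixed points}\}\equiv|\Sigma|\pmod p$, if there is at most one fixed point then $|\Sigma|\equiv 0$ or $1\pmod p$, i.e.\ $|\Sigma|\le 1$ or $|\Sigma|\ge p$. If instead $C_{1}\neq C_{2}$ are both normal in $Z$, then $[C_{1},C_{2}]\subseteq C_{1}\cap C_{2}=C$, so $C_{1}C_{2}\trianglelefteq Z$ with $(C_{1}C_{2})/C$ elementary abelian of order $p^{2}$; the $p+1$ subgroups of $C_{1}C_{2}$ of order $p^{k+1}$ containing $C$ are the points of $\mathbb{P}^{1}(\mathbb{F}_{p})$, the $p$‑group $Z$ fixes two of them, and since a nontrivial $p$‑element of $\mathrm{PGL}_{2}(\mathbb{F}_{p})$ has a unique fixed point the action is trivial; as $p$ of these $p+1$ subgroups are cyclic, $|\Sigma|\ge p$ again. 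With this lemma in hand, in the main case one argues as follows: $|I_{\mathcal C}(u)|>p^{\alpha}$ and $I_{\mathcal C}(u)$ is the union of the maximal cyclic subgroups through $u$, so $u$ lies in at least two of them; if $u$ lies in a cyclic $M_{1}$ of order $p^{\alpha}$, pick a second maximal cyclic $M_{2}\ni u$, let $M_{1}\cap M_{2}$ have order $p^{e}$, and apply the lemma to $M_{1}\cap M_{2}$ (the order‑$p^{e+1}$ subgroups of $M_{1}$ and of $M_{2}$ are two distinct overgroups) to get at least $p$ cyclic subgroups of order $p^{e+1}$ through $u$; a "grafting'' argument inside $C_{G}(M_{1}\cap M_{2})$, using $M_{1}$ to fill in the top of the chain, then promotes these to $n_{\alpha}\ge p$.

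The hard part will be making the last two steps work without abelianness. In the lemma one must verify that exactly $p$ of the $p+1$ intermediate subgroups are cyclic and handle $C_{1}C_{2}$ non‑abelian of class $2$. The genuinely delicate point is the grafting step: it can fail when $u$ lies in no cyclic subgroup of order $p^{\alpha}$, or when the overgroups of $M_{1}\cap M_{2}$ sit transversally to $M_{1}$ (e.g.\ $C_{G}(M_{1}\cap M_{2})/(M_{1}\cap M_{2})$ decomposing as $\mathbb{Z}_{p^{\alpha-e}}\times\mathbb{Z}_{p}$); in those situations one must instead show that $u$ already lies in so many small cyclic subgroups that the displayed sum exceeds $p^{\alpha+1}-p^{\alpha}+p^{\alpha-1}$ by size alone. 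This case, together with $p=2$ (where the dihedral, semidihedral and generalized quaternion $2$‑groups are exceptional and the commutator corrections to the power map $w\mapsto w^{p}$ behave differently from the odd case, in which $p\mid\binom{p}{2}$ kills them), is where I expect the real effort to be concentrated.
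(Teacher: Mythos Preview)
Your reduction to an order-$p$ vertex $u$ (which is \Cref{lem:connected}) and the counting formula $|I_{\mathcal C}(u)|=1+\sum_{k\ge 1}n_{k}(p^{k}-p^{k-1})$ are both correct, and the $0/1/{\ge}\,p$ lemma for cyclic overgroups is a reasonable ingredient. The genuine gap is exactly where you locate it: the ``grafting'' step. Obtaining $p$ cyclic subgroups of order $p^{e+1}$ above $M_{1}\cap M_{2}$ does not produce $p$ cyclic subgroups of order $p^{\alpha}$ through $u$; nothing forces those short overgroups to extend all the way up inside $C_{G}(M_{1}\cap M_{2})$, and the very configuration you mention (centralizer quotient isomorphic to $\mathbb{Z}_{p^{\alpha-e}}\times\mathbb{Z}_{p}$) blocks it. Your fallback (``so many small cyclic subgroups that the sum exceeds the threshold by size alone'') is at this point only a hope, not an argument. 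There is also a more basic issue upstream: nothing guarantees that the chosen $u$ lies in \emph{any} cyclic subgroup of order $p^{\alpha}$, so you cannot even start with $n_{\alpha}\ge 1$; the hypothesis $|I_{\mathcal C}(u)|>p^{\alpha}$ could in principle be witnessed entirely by many cyclic subgroups of order below $p^{\alpha}$ passing through $u$.

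For comparison, the paper does not actually supply a proof of \Cref{pro:nG} in the non-abelian generality in which it is stated. The bound $p^{\alpha+1}-p^{\alpha}+p^{\alpha-1}$ is derived only in \Cref{lem:noncyc}, under the additional hypothesis that $G$ is abelian: one writes $G=C_{p^{\alpha_{1}}}\times\cdots\times C_{p^{\alpha_{r}}}$, takes the order-$p$ element $x_{r}^{p^{\alpha-1}}$, and directly counts its neighbours in $\langle x_{r}\rangle$ together with the elements $x_{r-1}^{i}x_{r}^{k}$ with $1\le i\le p-1$ and $\gcd(k,p)=1$. A route closer to the paper's spirit would be to first look for a non-cyclic \emph{abelian} subgroup $A\le G$ with $\exp(A)=p^{\alpha}$ (for instance, whenever some $x$ of order $p^{\alpha}$ has $C_{G}(x)\supsetneq\langle x\rangle$, any $\langle x,y\rangle$ with $y\in C_{G}(x)\setminus\langle x\rangle$ works) and then quote \Cref{lem:noncyc} for $A$; the residual case, in which every element of maximal order is self-centralizing, is exactly where the groups of \Cref{thm:class} live and would still need a direct check.
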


We point out that the bound in Theorem \ref{pro:nG} is sharp in some sense. Indeed, for $G=C_{p^2} \times C_p$ we have $n_G = p^{3} - p^{2} + p$, where $C_k$ denotes the cyclic group of order $k$.






\section{The cyclic graph}\label{C}

In this section we will deal with the enhanced power graph of a group or what we like to call the cyclic graph of a group. Recall that the cyclic graph of a group $G$, denoted by $\Delta (G)$, is the graph whose vertex set is $G \setminus \{1\}$ and two distinct elements $x,y$ of $G$ are adjacent if and only if $\langle x, y\rangle$ is cyclic. When $x$ and $y$ are adjacent we will write  $x \sim y$.  We denote by $n_G$ the maximum of the sizes of all $I_{\mathcal C} (x)$ for $x \in G \setminus \{1\}$.  We begin with the following useful lemma.

\begin{lem}\label{lem:connected}
Let $p$ be a prime and let $G$ be a $p$-group. Then there exists an element $z \in G$ of order $p$ such that $|I_{\mathcal C} (z)| = n_G$.
\end{lem}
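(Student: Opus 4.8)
The plan is to exploit a simple monotonicity property of the sets $I_{\mathcal C}$ along powers of an element. The crucial observation is: if $x \in G\setminus\{1\}$ and $z \in \langle x\rangle$, then $I_{\mathcal C}(x) \subseteq I_{\mathcal C}(z)$. Indeed, suppose $y \in I_{\mathcal C}(x)$, so that $\langle x,y\rangle$ is cyclic. Since $z \in \langle x\rangle \le \langle x,y\rangle$ and $y \in \langle x,y\rangle$, the subgroup $\langle z,y\rangle$ is contained in the cyclic group $\langle x,y\rangle$, hence is itself cyclic; thus $y \in I_{\mathcal C}(z)$. In particular $|I_{\mathcal C}(x)| \le |I_{\mathcal C}(z)|$ whenever $z$ is a power of $x$.

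With this in hand the lemma is immediate. First I would use the finiteness of $G$ to choose an element $x \in G\setminus\{1\}$ with $|I_{\mathcal C}(x)| = n_G$. Writing $|x| = p^{k}$ with $k \ge 1$, set $z = x^{p^{k-1}}$, which has order exactly $p$ and lies in $\langle x\rangle$. The observation above gives $n_G = |I_{\mathcal C}(x)| \le |I_{\mathcal C}(z)|$, while $|I_{\mathcal C}(z)| \le n_G$ by the very definition of $n_G$ (note $z \ne 1$). Hence $|I_{\mathcal C}(z)| = n_G$, as required.

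There is essentially no obstacle here: the statement amounts to the fact that replacing a vertex by one of its powers can only enlarge its cyclic neighborhood, so in particular descending to an element of order $p$ preserves the maximum. The one point worth recording explicitly is that a nontrivial $p$-group always contains an element of order $p$ (Cauchy's theorem, or simply a suitable power of any nontrivial element), so the element $z$ produced above is a genuine vertex of $\Delta(G)$. I would also remark in passing that the argument is insensitive to whether $I_{\mathcal C}(x)$ is taken as the open or the closed neighborhood — that is, whether $1$ and $x$ are counted — since the containment $I_{\mathcal C}(x) \subseteq I_{\mathcal C}(z)$ holds verbatim in either convention.
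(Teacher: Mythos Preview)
Your proof is correct. Both you and the paper begin by choosing $x$ with $|I_{\mathcal C}(x)|=n_G$ and passing to the power $z=x^{p^{k-1}}$ of order $p$; the difference lies in how the inequality $|I_{\mathcal C}(x)|\le |I_{\mathcal C}(z)|$ is justified. The paper appeals to the connected-component structure of $\Delta(G)$ and quotes Lemma~2.2 of \cite{CLSTU} to argue that $z$ is adjacent to every vertex in its component, whereas you prove the containment $I_{\mathcal C}(x)\subseteq I_{\mathcal C}(z)$ directly from the fact that subgroups of cyclic groups are cyclic. Your route is more elementary and entirely self-contained, avoiding the external reference; the paper's route, on the other hand, makes explicit the useful structural fact (used just after the lemma) that $n_G$ equals the size of a connected component of $\Delta(G)$.
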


\begin{proof}
Observe that there exists an element $x \in G$ such that $|I_{\mathcal C} (x)| = n_G$. If $o (x) = p$, then we are done. Therefore, we assume that $o (x) = p^k$ where $k$ is an integer so that $k \geq 2$. Take $z = x^{p^{k-1}}$, and observe that $x$ and $z$ belong to the same connected component $\Upsilon$ in $\Delta(G)$, and $z$ is the only element of order $p$ in $\Upsilon$. By Lemma 2.2 of \cite{CLSTU}, $z \sim y$ for any element $y \in \Upsilon$, and so, $|I_{\mathcal C} (z)| \geq |I_{\mathcal C} (x)|=n_G$, which implies $|I_{\mathcal C} (z)| = n_G$.  
\end{proof}

By Lemma~\ref{lem:connected} and Lemma 2.2 of \cite{CLSTU}, one can easily see that $n_G = |\Upsilon| - 1$, where $\Upsilon$ is a connected component of $\Delta (G)$ containing a vertex of degree $n_G$.  

\subsection{Abelian $p$-groups}

In this subsection, we focus on Abelian $p$--groups.  In this next lemma, we compute $n_G$ when $G$ is a non-trivial cyclic group.

\begin{lem}\label{lem:cyclic}
If $G$ is a non-trivial cyclic group, then $n_G=|G|$.
\end{lem}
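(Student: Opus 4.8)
The plan is to exploit the most basic structural fact about cyclic groups: every subgroup of a cyclic group is again cyclic. Since in the definition $I_{\mathcal C}(x)$ consists of \emph{all} $y \in G$ (including $y=1$ and $y=x$) for which $\langle x, y\rangle$ lies in $\mathcal C$, for a cyclic $G$ this condition is vacuous, so each closed neighborhood is the whole group.

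Concretely, I would argue as follows. Fix any $x \in G$. For every $y \in G$ the subgroup $\langle x, y\rangle$ is a subgroup of the cyclic group $G$, hence cyclic; thus $y \in I_{\mathcal C}(x)$. Therefore $I_{\mathcal C}(x) = G$, and in particular $|I_{\mathcal C}(x)| = |G|$ for every $x \in G \setminus \{1\}$. Since $G$ is non-trivial, the set $G \setminus \{1\}$ is non-empty, so taking the maximum of these sizes yields $n_G = |G|$.

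An even shorter route, which I would at least mention, is to invoke the displayed inequality $\exp(G) \leq n_G \leq |G|$ from the introduction together with the fact that $\exp(G) = |G|$ for a non-trivial cyclic group; this forces $n_G = |G|$ immediately. Either way, there is essentially no obstacle here. The only point requiring a moment of care is the convention that $I_{\mathcal C}(x)$ is the \emph{closed} neighborhood ranging over all of $G$ rather than over $G \setminus \{1\}$, so that the value $|G|$ is actually attainable; this matches the definition recalled in \Cref{I}.
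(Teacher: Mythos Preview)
Your proof is correct and follows essentially the same idea as the paper: in a cyclic group every subgroup generated by two elements is cyclic, so $I_{\mathcal C}(x)=G$. The paper argues this only for a chosen generator $x$ (noting $G\setminus\langle x\rangle=\emptyset$), whereas you observe it holds for every $x$; and your alternative via $\exp(G)=|G|$ together with $\exp(G)\le n_G\le |G|$ is a legitimate shortcut the paper does not mention.
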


\begin{proof}
Let $x \in G$ such that $G= \langle x \rangle$. Since $o(x)=|G|$  and $G \setminus \langle x \rangle = \emptyset$, we conclude that $n_G=|G|$. 
\end{proof}

We next compute $n_G$ when $G$ is a $p$-group having exponent $p$.

\begin{lem}\label{lem:elabel}
Let $p$ be a prime and let $G$ be a $p$-group of exponent $p$. Then $n_G = p$.
\end{lem}

\begin{proof}
If $G$ is a cyclic group of order $p$, then the result follows from Lemma~\ref{lem:cyclic}.  Assume that $G$ is not cyclic, and consider an element $x \in G$ such that $|I_{\mathcal C} (x)| = n_G$.  As $o(x) = p$, we have $n_G \geq p$. 

Now observe that if $y \in G \setminus \langle x \rangle$, then $ \langle x, y \rangle$ is not cyclic. Indeed, arguing by contradiction let $z \in G$ such that $\langle x, y \rangle = \langle z \rangle$. Since $G$ has exponent $p$, there exist $i , j \in \{1, \ldots, p-1\}$ such that $x = z^i$ and $y = z^j$. Therefore, from $(i,p) = 1$ it follows that $\langle x \rangle = \langle z^i \rangle = \langle z \rangle$ and $y \in  \langle x \rangle$, a contradiction.  Hence, we conclude that $n_G = p$.
\end{proof}

We now show that if $G$ is non-cyclic abelian group whose exponent is larger than $p$, then $n_G$ is larger than the exponent of $G$.

\begin{lem}\label{lem:noncyc}
Let $p$ be a prime and let $G$ be a non-cyclic abelian $p$-group of exponent $exp(G) = p^{\alpha}$, where $\alpha \geq 2$. Then $n_G \geq p^{\alpha + 1} - p^{\alpha} + p^{\alpha-1}$.  As a consequence, $n_G > \exp(G)$.
\end{lem}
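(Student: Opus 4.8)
The plan is to exhibit an element $z$ of order $p$ with a large neighborhood by passing to a conveniently small subgroup. By the structure theorem for finite abelian $p$-groups, since $G$ is non-cyclic with $\exp(G) = p^{\alpha}$, the group $G$ contains a subgroup $K \cong C_{p^{\alpha}} \times C_{p}$; write $K = \langle a \rangle \times \langle b \rangle$ with $o(a) = p^{\alpha}$ and $o(b) = p$, and put $z = a^{p^{\alpha - 1}}$, the generator of the unique subgroup of order $p$ of $\langle a \rangle$. Because $K \leq G$, every $y \in K$ with $\langle z, y \rangle$ cyclic lies in $I_{\mathcal C}(z)$; hence $n_G \geq |I_{\mathcal C}(z)| \geq |\{\, y \in K : \langle z, y \rangle \text{ is cyclic} \,\}|$, and it suffices to bound this last quantity from below.

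The crux is the identity $\{\, y \in K : \langle z, y \rangle \text{ cyclic} \,\} = \langle z \rangle \cup \{\, y \in K : o(y) > p \,\}$, a disjoint union. For the inclusion ``$\subseteq$'' I would use the elementary fact that in a finite $p$-group $\langle x, y \rangle$ is cyclic if and only if $\langle x \rangle \subseteq \langle y \rangle$ or $\langle y \rangle \subseteq \langle x \rangle$ (the subgroups of a cyclic $p$-group being totally ordered by inclusion): applied with the order-$p$ element $x = z$, this says $\langle z, y \rangle$ is cyclic exactly when $y \in \langle z \rangle$ or $z \in \langle y \rangle$, and in the second case, if moreover $y \notin \langle z \rangle$ then $o(y) > p$. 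The substantive direction is ``$\supseteq$'': one must check that \emph{every} $y \in K$ of order exceeding $p$ has $z \in \langle y \rangle$. Writing $y = (a^{i}, b^{j})$ and $i = p^{s} u$ with $p \nmid u$, the hypothesis $o(y) > p$ forces $o(a^{i}) = p^{\alpha - s} \geq p^{2}$, so $s \leq \alpha - 2$; then $y^{o(y)/p} = \big( a^{i p^{\alpha - s - 1}}, b^{j p^{\alpha - s - 1}} \big)$ has trivial second coordinate (as $\alpha - s - 1 \geq 1$) and first coordinate $a^{u p^{\alpha - 1}}$, which generates $\langle a^{p^{\alpha - 1}} \rangle = \langle z \rangle$; hence the unique subgroup of order $p$ of the cyclic group $\langle y \rangle$ is $\langle z \rangle$, i.e.\ $z \in \langle y \rangle$. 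This valuation bookkeeping is the only genuine computation, and I expect it to be the main (but routine) obstacle.

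With the identity in hand the count is immediate: $\langle z \rangle$ has $p$ elements, and the number of $y \in K$ with $o(y) > p$ is $|K| - |\{\, y \in K : y^{p} = 1 \,\}| = p^{\alpha + 1} - p^{2}$, since the elements of order dividing $p$ in $C_{p^{\alpha}} \times C_{p}$ form a subgroup of order $p^{2}$. Therefore $n_G \geq p + p^{\alpha + 1} - p^{2} = p^{\alpha + 1} - p^{2} + p$. It then remains to record two numerical facts. First, $p^{\alpha + 1} - p^{2} + p \geq p^{\alpha + 1} - p^{\alpha} + p^{\alpha - 1}$ is equivalent to $p^{\alpha - 1}(p - 1) \geq p(p - 1)$, i.e.\ to $\alpha \geq 2$, which holds by hypothesis; this yields the stated bound. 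Second, $\big( p^{\alpha + 1} - p^{\alpha} + p^{\alpha - 1} \big) - p^{\alpha} = p^{\alpha - 1}(p - 1)^{2} > 0$, so $n_G \geq p^{\alpha + 1} - p^{\alpha} + p^{\alpha - 1} > p^{\alpha} = \exp(G)$, which is the asserted consequence.
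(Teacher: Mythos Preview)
Your proof is correct and follows essentially the same strategy as the paper's: pass to a subgroup $K\cong C_{p^{\alpha}}\times C_p$, take $z=a^{p^{\alpha-1}}$, and count its cyclic-graph neighbours inside $K$. The paper exhibits only the neighbours in $\langle a\rangle$ together with those of the form $b^{i}a^{k}$ with $\gcd(k,p)=1$, obtaining the bound $p^{\alpha+1}-p^{\alpha}+p^{\alpha-1}$ directly; you instead characterise the full set $I_{\mathcal C}(z)\cap K=\langle z\rangle\cup\{y\in K:o(y)>p\}$, which yields the sharper intermediate estimate $p^{\alpha+1}-p^{2}+p$ before deducing the stated bound---a minor refinement, but the idea is the same.
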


\begin{proof}
As $G$ is abelian, we may assume 
$$G=C_{p^{\alpha_1}} \times \cdots \times C_{p^{\alpha_r}},$$ 
where $r\geq 2$, $1 \leq \alpha_1 \leq \cdots \leq \alpha_r=\alpha$ and $C_{p^{\alpha_i}}=\langle x_i\rangle$ is a cyclic group of order $p^{\alpha_i}$.

If $\alpha_{r-1}=1$, then the vertex $x_r^{p^{\alpha-1}}$ is adjacent to $p^{\alpha}-2$ non-trivial elements of $\langle x_r \rangle$ and to any element of the form $x_{r-1}^i x_r^k$ where $i=1, \ldots, p-1$ and $k$ is a positive integer less than $p^{\alpha}$ and coprime with $p$. Hence there are precisely $p^{\alpha} - p^{\alpha - 1}$ choices for $k$, which implies $$|I_{\mathcal C}(x)| \geq p^{\alpha} + (p-1)(p^{\alpha} - p^{\alpha-1}) = p^{\alpha + 1} - p^{\alpha} + p^{\alpha-1}.$$

If $\alpha_{r-1} >1$, then one can consider the subgroup $\langle x_r^{p^{\alpha_{r-1} - 1}}, x_r \rangle$ arguing as in the previous case.
\end{proof}

We now collect these lemmas in a proposition where we note that for an abelian $p$-group $G$ that $n_G$ equals the exponent of $G$ if and only if $G$ is cyclic or elementary abelian.

\begin{pro}\label{prop:abelian}
Let $p$ be a prime and let $G$ be an abelian $p$-group. Then $n_G=\exp(G)$ if and only if $G$ is either cyclic or elementary abelian.
\end{pro}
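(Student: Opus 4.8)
The plan is to prove Proposition~\ref{prop:abelian} by assembling the lemmas already at hand, splitting into the two directions of the biconditional. For the reverse direction, assume $G$ is cyclic or elementary abelian. If $G$ is cyclic, then Lemma~\ref{lem:cyclic} gives $n_G = |G| = \exp(G)$ directly. If $G$ is elementary abelian, then $\exp(G) = p$ (or $G$ is trivial, which we exclude as a degenerate case, or $G = C_p$ which is both), and Lemma~\ref{lem:elabel} applied to the $p$-group $G$ of exponent $p$ yields $n_G = p = \exp(G)$. So this direction is immediate.

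For the forward direction, I would argue by contraposition: suppose $G$ is an abelian $p$-group that is neither cyclic nor elementary abelian, and show $n_G \neq \exp(G)$. Since $G$ is not elementary abelian, $\exp(G) = p^{\alpha}$ with $\alpha \geq 2$. Since $G$ is not cyclic, it is a non-cyclic abelian $p$-group of exponent $p^{\alpha}$ with $\alpha \geq 2$, so Lemma~\ref{lem:noncyc} applies and gives $n_G \geq p^{\alpha+1} - p^{\alpha} + p^{\alpha-1} > p^{\alpha} = \exp(G)$. Hence $n_G \neq \exp(G)$, completing the contrapositive.

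There is essentially no obstacle here: the proposition is a bookkeeping corollary of Lemmas~\ref{lem:cyclic}, \ref{lem:elabel}, and \ref{lem:noncyc}. The only point requiring a sentence of care is the identification "abelian and not elementary abelian $\Leftrightarrow$ $\exp(G) \geq p^2$" for a nontrivial abelian $p$-group, which is standard from the structure theorem for finite abelian groups. I would also note that the trivial group should be excluded or handled as a vacuous boundary case, since $n_G$ is defined via $G \setminus \{1\}$; in context $G$ is understood nontrivial (matching the hypothesis of Lemma~\ref{lem:cyclic}).
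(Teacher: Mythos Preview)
Your proof is correct and follows essentially the same approach as the paper's own proof: both directions are handled by invoking Lemmas~\ref{lem:cyclic}, \ref{lem:elabel}, and \ref{lem:noncyc}, with the forward direction argued by contraposition. The only difference is that you spell out a bit more detail (the exponent observation and the trivial-group boundary case), which the paper leaves implicit.
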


\begin{proof}
If $G$ is either cyclic or elementary abelian, then the result follows from Lemma~\ref{lem:cyclic} and Lemma~\ref{lem:elabel}. Conversely, assume that $n_G = \exp(G)$. If $G$ is neither cyclic nor elementary abelian, then applying Lemma~\ref{lem:noncyc} we have $n_G > \exp(G)$, a contradiction.
\end{proof}

\subsection{Nonabelian $p$-groups}

We now shift our focus to nonabelian $p$-groups.  When $p$ is a prime, we take $\alpha$ to be an integer greater than $1$ when $p$ is odd and an integer greater than $2$ when $p = 2$. We denote by $M_{p^{\alpha +1}}$ the group
$$
M_{p^{\alpha +1}} = \langle x,y \mid x^{p^{\alpha}}=y^p=1, \ x^y=x^{p^{\alpha-1}+1} \rangle.
$$
Going further, we denote by $D_{2^{\alpha +1}}$, $S_{p^{\alpha +1}}$ and $Q_{2^{\alpha +1}}$ the dihedral, semidihedral, and generalized quaternion groups given by the following presentations:
$$
D_{2^{\alpha +1}} = \langle x,y \mid x^{2^{\alpha}}=y^2=1, \ x^y=x^{-1} \rangle,
$$
$$
S_{p^{\alpha +1}} = \langle x,y \mid x^{p^{\alpha}}=y^p=1, \ x^y=x^{p^{\alpha-1}-1} \rangle,
$$
$$
Q_{2^{\alpha +1}} = \langle x,y \mid x^{2^{\alpha}-1}=y^2, \ y^4=1,\ x^y=x^{-1} \rangle.
$$

The characterization of non-abelian $p$-groups with a cyclic maximal subgroup is well-known (see \cite{GOR}).

\begin{thm}\label{thm:class}
Let $p$ be a prime and let $G$ be a non-abelian $p$-group of order $p^{\alpha+1}$ with a cyclic subgroup of order $p^{\alpha}$. Then
\begin{enumerate}
\item[(i)] if $p$ is odd then $G$ is isomorphic to $M_{p^{\alpha + 1}}$;
\item[(ii)] if $p=2$ and $\alpha=2$, then $G$ is isomorphic to either $D_{8}$ or $Q_{8}$;
\item[(iii)] if $p=2$ and $\alpha>3$ then $G$ is isomorphic to either $M_{2^{\alpha + 1}}$, $D_{2^{\alpha + 1}}$, $Q_{2^{\alpha + 1}}$ or $S_{2^{\alpha + 1}}$.
\end{enumerate}
\end{thm}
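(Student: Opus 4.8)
The plan is to run the classical extension argument. Set $H=\langle x\rangle$; since $[G:H]=p$, the subgroup $H$ is maximal, hence normal, and moreover $C_G(H)=H$: otherwise $C_G(H)=G$, so $H$ is central, but then $G/H$ cyclic with $H\le Z(G)$ forces $G$ abelian, against hypothesis. Hence conjugation induces an injection $G/H\cong C_p\hookrightarrow\mathrm{Aut}(C_{p^\alpha})\cong(\mathbb Z/p^\alpha\mathbb Z)^\times$, so fixing any $y\in G\setminus H$ the integer $r$ determined by $x^y=x^r$ has multiplicative order exactly $p$ modulo $p^\alpha$. The group $G=\langle x,y\rangle$ is then completely described by the pair $(r,s)$, where $y^p=x^s$, and the task reduces to classifying such pairs up to the changes of generators $x\mapsto x^u$ ($\gcd(u,p)=1$) and $y\mapsto y^jx^t$ ($\gcd(j,p)=1$) that preserve the situation.

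First I would list the admissible $r$. For $p$ odd, $(\mathbb Z/p^\alpha\mathbb Z)^\times$ is cyclic, so its unique subgroup of order $p$ is $\langle 1+p^{\alpha-1}\rangle$; thus $r\equiv 1+kp^{\alpha-1}$ with $k\in\{1,\dots,p-1\}$, and replacing $y$ by $y^j$ (still outside $H$, with $x^{y^j}=x^{r^j}=x^{1+jkp^{\alpha-1}}$) normalizes $k=1$, i.e.\ $r=1+p^{\alpha-1}$. For $p=2$ I would use that $(\mathbb Z/4\mathbb Z)^\times=\{\pm1\}$ while $(\mathbb Z/2^\alpha\mathbb Z)^\times\cong\langle-1\rangle\times\langle5\rangle\cong C_2\times C_{2^{\alpha-2}}$ for $\alpha\ge3$; hence the elements of order dividing $2$ are $\{1,-1\}$ when $\alpha=2$ and $\{1,\,-1,\,2^{\alpha-1}-1,\,2^{\alpha-1}+1\}$ when $\alpha\ge3$. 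Discarding $r=1$ (which makes $G$ abelian) leaves exactly the exponents appearing in $D$ and $Q$ (both $r=-1$), in $S$ ($r=2^{\alpha-1}-1$), and in $M$ ($r=2^{\alpha-1}+1$).

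Next I would pin down $y^p=x^s$. Since $y$ commutes with $y^p$, we get $s(r-1)\equiv0\pmod{p^\alpha}$, forcing $p\mid s$ for odd $p$, and for $p=2$ forcing $2\mid s$ when $r=2^{\alpha-1}\pm1$ and $2^{\alpha-1}\mid s$ when $r=-1$. To normalize, compute $(yx^t)^p=y^p\,x^{tN}$ with $N=1+r+\cdots+r^{p-1}$; a short congruence gives $N\equiv p\pmod{p^\alpha}$ for odd $p$ (using $p\mid\binom p2$), while $N=1+r$ for $p=2$. Substituting the possibilities for $s$ and solving the resulting linear congruence for $t$ shows one can always achieve $(yx^t)^p=1$, with one exception: when $p=2$ and $r=-1$ one has $N=0$, so $(yx^t)^2=y^2$ is insensitive to $t$, and $s\in\{0,2^{\alpha-1}\}$ genuinely splits the case into $D_{2^{\alpha+1}}$ ($s=0$) and $Q_{2^{\alpha+1}}$ ($s=2^{\alpha-1}$, the unique involution of $H$); for $\alpha=2$ this is precisely the $D_8$/$Q_8$ dichotomy. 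Reading off $(r,y^p)$ in each surviving case reproduces the listed presentations, completing the classification.

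The step I expect to be most delicate is this last one: correctly evaluating $N$ and the linear congruences modulo $p^\alpha$ (keeping careful track of the $2$-adic valuations of $r-1$, of $1+r$, and of $N$), and checking that the permitted generator changes really cannot merge $Q$ into $D$ or collapse the four $2$-groups. If in addition one wants the groups in (iii) to be pairwise non-isomorphic — this is not literally asserted by the statement — that is a short separate check via the number of involutions together with the isomorphism type of the center (or of $\Phi(G)$). Finally I would note that the $p=2$ argument is valid verbatim at $\alpha=3$, so the hypothesis in (iii) is most naturally read as $\alpha\ge3$; alternatively one simply invokes \cite{GOR}.
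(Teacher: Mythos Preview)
Your sketch is the standard extension argument and is essentially correct, but there is nothing in the paper to compare it to: the paper does not prove this theorem at all, simply stating it as well-known with a reference to Gorenstein's \emph{Finite Groups}~\cite{GOR}. So your closing remark ``alternatively one simply invokes \cite{GOR}'' is precisely what the paper does, and everything preceding it goes well beyond what the authors supply.

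One small correction to your sketch, exactly at the point you flagged as delicate: in the semidihedral case $r=2^{\alpha-1}-1$ one has $v_2(r-1)=1$ (not $\alpha-1$), so $s(r-1)\equiv 0\pmod{2^\alpha}$ forces $2^{\alpha-1}\mid s$, not merely $2\mid s$. This stronger constraint is what you actually need, since here $N=1+r=2^{\alpha-1}$ and the congruence $s+tN\equiv 0\pmod{2^\alpha}$ is solvable for $t$ only because $s\in\{0,2^{\alpha-1}\}$. With that adjustment the normalization step goes through as you describe.
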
 

We compute $n_G$ for nonabelian $p$-groups with a maximal cyclic subgroup of index $p$.

\begin{pro}\label{pro:maxcyc}
Let $p$ be a prime and let $G$ be a $p$-group of order $p^{\alpha+1}$. Assume that $G$ has a maximal cyclic subgroup of order $p^{\alpha}$. Then $n_G = \exp (G)$ if and only if either $G$ is cyclic, or $\exp (G) = p$, or $G \simeq D_{2^{\alpha+1}}$.
\end{pro}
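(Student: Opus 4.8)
The plan is to treat the forward implication as a case analysis over the groups appearing in Theorem~\ref{thm:class}, and the reverse implication by direct verification for the three families ($G$ cyclic, $\exp(G)=p$, $G\simeq D_{2^{\alpha+1}}$). The reverse direction is quick: if $G$ is cyclic, Lemma~\ref{lem:cyclic} gives $n_G=|G|=\exp(G)$; if $\exp(G)=p$, then Lemma~\ref{lem:elabel} gives $n_G=p=\exp(G)$; and if $G\simeq D_{2^{\alpha+1}}$, one computes directly, using Lemma~\ref{lem:connected}, that the element of order $2$ maximizing $|I_{\mathcal C}(z)|$ is the central involution $x^{2^{\alpha-1}}$, whose connected component in $\Delta(G)$ is exactly the cyclic maximal subgroup $\langle x\rangle$ (every noncentral involution $x^i y$ generates with $x^{2^{\alpha-1}}$ a Klein four-group, hence is not adjacent to it), so that $n_G = |\langle x\rangle| - 1 + 1 = 2^{\alpha} = \exp(D_{2^{\alpha+1}})$. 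One should double-check the small case $D_8$ separately, where $\exp(D_8)=4$ and indeed $n_{D_8}=4$.

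For the forward implication, assume $n_G=\exp(G)$. If $G$ is abelian then Proposition~\ref{prop:abelian} forces $G$ cyclic or elementary abelian, i.e.\ cyclic or $\exp(G)=p$, and we are done; so assume $G$ nonabelian. Since $G$ has a maximal cyclic subgroup $\langle x\rangle$ of order $p^{\alpha}$, if $\alpha=1$ then $|G|=p^2$ and $G$ is abelian, contradiction; so $\alpha\geq 2$ and $\exp(G)\in\{p^{\alpha},p^{\alpha+1}\}$. In fact $\exp(G)=p^{\alpha}$ in all of Theorem~\ref{thm:class}'s nonabelian cases except possibly when there is an element of order $p^{\alpha+1}$, which cannot happen since $\langle x\rangle$ is maximal and $G$ is nonabelian; so $\exp(G)=p^{\alpha}$ and the hypothesis reads $n_G=p^{\alpha}$. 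Now I invoke Theorem~\ref{thm:class}: if $p$ is odd, $G\simeq M_{p^{\alpha+1}}$; if $p=2$ and $\alpha=2$, $G\simeq D_8$ or $Q_8$; if $p=2$ and $\alpha\geq 3$, $G$ is one of $M_{2^{\alpha+1}}$, $D_{2^{\alpha+1}}$, $Q_{2^{\alpha+1}}$, $S_{2^{\alpha+1}}$. (The case $p=2$, $\alpha=3$ with $M_{16}, D_{16}, Q_{16}, SD_{16}$ should be included in the $\alpha\geq 3$ bucket and handled uniformly or, if the cited classification excludes $\alpha=3$ from (iii), checked by hand.) For $D_{2^{\alpha+1}}$ the computation above gives $n_G=\exp(G)$, so it survives. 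The task is then to show that $M_{p^{\alpha+1}}$, $Q_{2^{\alpha+1}}$ and $S_{2^{\alpha+1}}$ all have $n_G>\exp(G)$.

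The core of the argument is the contribution from the central element. By Lemma~\ref{lem:connected}, $n_G=|\Upsilon|-1$ where $\Upsilon$ is the connected component of $\Delta(G)$ through the order-$p$ element $z$ realizing the maximum; since $z$ is adjacent to everything in $\langle x\rangle$, we always have $n_G\geq p^{\alpha}-1$, and we need the strict inequality $n_G\geq p^{\alpha}$, i.e.\ we need $\Upsilon$ to strictly contain a cyclic maximal subgroup. For $M_{p^{\alpha+1}}$: the subgroup $\langle x\rangle$ is cyclic maximal, but the center is $\langle x^p\rangle$ (of order $p^{\alpha-1}$) together with... more carefully, one locates a second cyclic maximal subgroup or a cyclic subgroup of order $p^{\alpha}$ meeting $\langle x\rangle$ nontrivially; since all maximal subgroups of $M_{p^{\alpha+1}}$ of order $p^{\alpha}$ that are cyclic share the subgroup $\Omega$ of order $p$ in the center, their union lies in a single component, and that union has more than $p^{\alpha}$ elements (here one uses that $M_{p^{\alpha+1}}$ has more than one cyclic subgroup of order $p^{\alpha}$ — e.g.\ $\langle xy\rangle$ — so $|\Upsilon|\geq p^{\alpha}+ (\text{something}) > p^{\alpha}$). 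For $Q_{2^{\alpha+1}}$: it is well known that $Q_{2^{\alpha+1}}$ has a unique involution, the central one, so $\Delta(Q_{2^{\alpha+1}})$ is connected (by Lemma~2.2 of \cite{CLSTU}, every vertex is adjacent to the unique central involution), whence $n_G=|G|-1=2^{\alpha+1}-1>2^{\alpha}=\exp(G)$. For $S_{2^{\alpha+1}}$: here $\langle x\rangle$ is cyclic maximal and $x^{2^{\alpha-1}}$ is the central involution; one checks that the elements $x^i y$ that square into $\langle x\rangle$ nontrivially — namely those with $x^i y$ of order $4$, squaring to $x^{2^{\alpha-1}}$ — lie in the same component as $x^{2^{\alpha-1}}$ and are not in $\langle x\rangle$, so $|\Upsilon|>2^{\alpha}$. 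In each case $n_G>\exp(G)$, eliminating these groups and completing the characterization. The main obstacle I anticipate is the bookkeeping for $M_{p^{\alpha+1}}$ and $S_{2^{\alpha+1}}$: one must correctly identify the connected component $\Upsilon$ of the central order-$p$ element, i.e.\ determine exactly which elements outside $\langle x\rangle$ are joined to it, and this requires knowing the cyclic subgroup structure of these groups — in particular which elements of $G\setminus\langle x\rangle$ generate a cyclic group together with the central involution, which amounts to finding all cyclic subgroups of order $\geq p$ whose unique subgroup of order $p$ is the chosen central one.
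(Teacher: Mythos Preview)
Your approach matches the paper's: the reverse implication via Lemmas~\ref{lem:cyclic} and~\ref{lem:elabel} plus a direct check for $D_{2^{\alpha+1}}$, and the forward implication by reducing to the nonabelian case via Proposition~\ref{prop:abelian} and then running through the classification in Theorem~\ref{thm:class}, eliminating $M_{p^{\alpha+1}}$ and $S_{2^{\alpha+1}}$ by exhibiting a cyclic subgroup $\langle yx\rangle$ outside $\langle x\rangle$ whose power lands nontrivially in $\langle x\rangle$ (the paper computes $(yx)^p=x^p$ for $p$ odd, $(yx)^2=x^{2^{\alpha-1}+2}$ for $M_{2^{\alpha+1}}$, and $(yx)^2=x^{2^{\alpha-1}}$ for $S_{2^{\alpha+1}}$, which is exactly the mechanism you sketch). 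You are in fact more complete than the paper's written proof, which treats only $M$ and $S$ explicitly and does not address $Q_{2^{\alpha+1}}$; your unique-involution argument for the quaternion case fills that gap.
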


\begin{proof}
If $G$ is cyclic or $\exp(G) = p$, then $n_G = \exp (G)$ by Lemmas \ref{lem:elabel} and \ref{lem:cyclic}.  Moreover, if $G \simeq D_{2^{\alpha+1}}$, then $G$ has only one cyclic subgroup of order $2^{\alpha}$ while all the other cyclic subgroups have order $2$, which implies $n_G = \exp(G)$.

Now, assume that $n_G = \exp(G)$. If $G$ is abelian then $G$ is either cyclic or elementary abelian by Proposition~\ref{prop:abelian}. Now assume that $G$ is neither abelian nor of exponent $p$. From Theorem~\ref{thm:class} we have to analyze two cases. First assume that $G$ is isomorphic to $M_{p^{\alpha + 1}}$. 
Then $(yx)^p = x^{\frac{p(p-1)}{2}p^{\alpha-1}+p}$ which yields a contradiction.  Indeed, when $p$ is odd $(yx)^p = x^p$ and $|I_{\mathcal C} (x^p)| > \exp(G)$ as $x^p$ is connected to every element of $\langle x \rangle$ and to every element of $\langle yx \rangle$.  If $p=2$, then $(yx)^2 = x^{2^{\alpha - 1} +2}$, and $I_{\mathcal C}(x^{2^{\alpha-1} + 2})$ contains more than $2^{\alpha}$ elements. 

Finally, assume that $p = 2$ and $G$ isomorphic to $S_{2^{\alpha + 1}}$. 
Then $(yx)^2 = x^{2^{\alpha-1}}$ and $|I_{\mathcal C} (yx)| > \exp (G)$.
\end{proof}

We are now in a position to prove Theorem \ref{thm:main.cyc}.

\begin{proof}[Proof of Theorem \ref{thm:main.cyc}]

By Lemmas \ref{lem:cyclic} and \ref{lem:elabel} and Proposition \ref{pro:maxcyc}, we only need to prove that if $n_G=\exp(G)$ then $G$ is either cyclic, or $\exp(G)=p$ or $G$ is a dihedral $2$-group. Thus let $n_G=\exp(G)$, and 
by way of contradiction assume that $G$ is neither cyclic, nor $\exp (G) = p$, nor a dihedral group of order $2^{\exp (G) + 1}$, such that $G$ has minimal order. Hence, there exists an element $x \in G$ such that $p < o(x) = \exp (G)$. By Proposition \ref{pro:maxcyc}, it follows that $p \cdot o(x) < |G|$, and thus, $G$ contains a proper subgroup $H$ such that $x \in H$ and  $|H|=p\cdot o(x)$. Then $\exp (H) = \exp (G)$, and $H$ has a cyclic subgroup of index $p$.  By Proposition \ref{pro:maxcyc}, $H$ is a dihedral group of order $2 \exp(G)$ since $H$ is neither cyclic nor $\exp(H) = p$.  As a consequence $G$ is a $2$-group, and by minimality, $|G:H|=2$. If $o(x)=4$, then $|G|=16$ and an easy computation using GAP shows that this is a contradiction. Hence we may assume $o(x)>4$. Now assume that there exists an element $a \in G \setminus H$ such that $o(a)>4$. Then $a^2 \in H$ and $o(a^2)>2$. This implies that $a^2 \in \langle x \rangle$ and $|I_{\mathcal C} (a^2)| > \exp (G)$. Hence we may assume that $o(a) \leq 4$ for all $a \in G \setminus H$. First assume that $G \setminus H$ contains an element $a$ of order $2$. If $a$ does not invert $x$, then $(xa)^2=xx^a$ is a nontrivial element of $\langle x \rangle$, since $\langle x \rangle$ is normal in $G$. As a consequence, $|I_{\mathcal C} ((xa)^2)| > \exp (G)$. Now assume that $x^a=x^{-1}$. Let $b \in H$ such that $x^b=x^{-1}$. Then $x^{ab}=x$ and $ab$ belongs to the centralizer in $G$ of $x$. Thus, $(xab)^4=x^4 \neq 1$, and $|I_{\mathcal C} (x^4)| > \exp (G)$. Therefore we only need to address the case in which $o(a)=4$ for every $a \in G \setminus H$. If $a^2 \in \langle x \rangle$ for some $a \in G \setminus H$, then $|I_{\mathcal C}(a^2)|> \exp(G)$. This implies that $a^2 \in H \setminus \langle x \rangle$. As a consequence $a^2$ inverts $x$. On the other hand, the dihedral groups have no automorphisms of order $4$ whose square inverts its element of maximal order (see for instance Theorem 34.8 (a) of \cite{pgrps}). The final contradiction proves the theorem.
\end{proof}

\section*{Acknowledgements}
\noindent This work was partially supported by the National Group for Algebraic and Geometric Structures, and their Applications  (GNSAGA -- INdAM). This work was carried out during the second author's visit to the Kent State University. He wishes to thank the Department of Mathematical Science for the excellent hospitality.

\bibliographystyle{plain}

\end{document}